\newtheorem{mydef}{Definition}
\newtheorem{mythe}{Theorem}
\newtheorem{myrem}{Remark}
\newtheorem{mypro}{Property}
\journal{ArXiv}
\begin{document}

\begin{frontmatter}



\title{A development of Lagrange interpolation, Part I: Theory}





\author{Mehdi Delkhosh$^{1}$, Kourosh Parand$^{2,3}$, Amir H. Hadian-Rasanan$^{2}$}

\address{$^{1}$Department of Mathematics and Computer Sciences, Islamic Azad University, Bardaskan Branch, Bardaskan, Iran. \\
$^{2}$Department of Computer Sciences, Shahid Beheshti University, G.C., Tehran, Iran.\\
$^{3}$Department of Cognitive Modelling, Institute for Cognitive and Brain Sciences, Shahid Beheshti University, G.C, Tehran, Iran.\\
Emails: mehdidelkhosh@yahoo.com, k\_parand@sbu.ac.ir, amir.h.hadian@gmail.com}

\begin{abstract}
In this work, we introduce the new class of functions which can use to solve the nonlinear/linear multi-dimensional differential equations. Based on these functions, a numerical method is provided which is called the Developed Lagrange Interpolation (DLI). For this, firstly, we define the new class of the functions, called the Developed Lagrange Functions (DLFs), which satisfy in the Kronecker Delta at the collocation points. Then, for the DLFs, the first-order derivative operational matrix of $\textbf{D}^{(1)}$ is obtained, and a recurrence relation is provided to compute the high-order derivative operational matrices of $\textbf{D}^{(m)}$, $m\in \mathbb{N}$; that is, we develop the theorem of the derivative operational matrices of the classical Lagrange polynomials for the DLFs and show that the relation of $\textbf{D}^{(m)}=(\textbf{D}^{(1)})^{m}$ for the DLFs is not established and is developable. Finally, we develop the error analysis of the classical Lagrange interpolation for the developed Lagrange interpolation. 
\end{abstract}

\begin{keyword}
Developed Lagrange function, Developed Lagrange Interpolation, Derivative operational matrix, Collocation method.

\MSC[2010] 58C40 \sep 35S10 \sep 35S11 \sep 65M70
\end{keyword}

\end{frontmatter}

\section{Introduction}
Many events in medicine, physics, applied sciences, biology, industry, and engineering are implemented by integro-differential/differential/integral equations of various orders. Many of these equations cannot be solved analytically or their analytical solution is costly. For this reason, numerical and semi-analytical methods are used for solving them. As a result, providing a numerical method efficiently and appropriately can be very useful. Many researchers have proposed several methods for solving the equations which have their own disadvantages and advantages, such as the Finite element method (FEM) \cite{refi03}, Finite difference method (FDM) \cite{refi01}, Spectral methods \cite{refpa49,refpa49_2}, Meshfree methods \cite{refpa01}, Adomian decomposition method \cite{refi22}, Fractional spectral collocation method \cite{ref002}, Variational iteration method \cite{refi24}, Homotopy perturbation method \cite{refpa07}, and Exp-function method \cite{refi27}.

There are famous numerical methods such as FDM and FEM which the implementation of them is locally and need to build the network of data, and also the methods such as Meshfree methods that do not require to build the network of data. But, the spectral methods are continuous, global, and do not need to construct the network of data, in addition, in these methods, orthogonal basic functions are usually used for reducing computational costs. Especially in pseudospectral methods, that are an important part of spectral methods, it is usually applied from basis functions where satisfy in the property of Kronecker delta function and the operational metrics which reduce the computational costs. For these reasons, here we are going to introduce a new spectral method which can use for solving some equations in applied sciences.

In recent years, the methods with exponential convergence rate have been introduced by some researchers, such as the hp-spectral element methods of Petrov-Galerkin type \cite{ref023,ref024}, the fractional spectral and pseudo-spectral methods in unbounded domains \cite{ref026,ref025}, the fractional pseudospectral method \cite{ref_FPM}, the generalized pseudospectral method \cite{ref_GPM}, and other methods \cite{ref025_1,ref025_2}. 

In this work, we introduce the method of \textit{Developed Lagrange Interpolation} (DLI) which can use to solve the nonlinear or linear partial/ordinary differential equations, where is a development of the Lagrange interpolation. For this, at first, we introduce the \textit{Developed Lagrange Functions} (DLFs) for the method of DLI and its requirements in Section \ref{sec_DLI}, and we will see that due to the form of defining the functions of DLFs, the DLI has several specific advantages such as the exponential convergence rate. The derivative operational matrices of $\textbf{D}^{(m)}$, $m\in \mathbb{N}$, for DLFs are obtained in Section \ref{sec_deri_matrix}. Section \ref{sec_error} provides the error analysis for DLI. 
A summary and conclusion of the method are given in Section \ref{sec_sum}.

\section{Developed Lagrange Interpolation} \label{sec_DLI}
In this section, firstly, we introduce the new class of functions that called the \textit{developed Lagrange functions}, then some of their properties are expressed, and finally, they are used to provide a method for solving differential equations, where called the \textit{Developed Lagrange Interpolation} (DLI).

\begin{mydef} \label{def1}
Let $f(x)$ be a continuous one-dimensional function on the domain of $\Lambda=[a,b]$, where $a,b \in [-\infty,+\infty]$, and $w(x)$ is a positive weight function on $\Lambda$, then we define $\|f(x)\|_{\infty} = sup\{|f(x)|: x \in \Lambda\}$. And also, for multi-dimensional functions: let $f\big(x^{(1)},x^{(2)},\cdots,x^{(p)}\big)$ be a continuous $p$-dimensional function on the domain of $\Lambda=[a_1,b_1]\times\cdots\times [a_p,b_p]$, where $a_i,b_i \in [-\infty,+\infty]$, and $w(\textit{\textbf{x}})$ is a positive weight function on $\Lambda$, then we define $\|f(\textbf{x})\|_{\infty} = sup\{|f(\textit{\textbf{x}})|: \textit{\textbf{x}} \in \Lambda\}$ where $\textit{\textbf{x}}=\big(x^{(1)},x^{(2)},\cdots,x^{(p)}\big)$.

\end{mydef}

\begin{mydef} \label{def2}
Suppose that $A=[a_0,a_1,\cdots,a_N]^T$ and $B=[b_0,b_1,\cdots,b_M]^T$ are two arbitrary vectors, then the Kronecker product of these two vectors is a $(N+1)(M+1)$-dimensional vector defined as follows:
{\small
$$A\otimes B= \begin{bmatrix} a_0 \\[0.3em]  \vdots \\[0.3em]  a_N \end{bmatrix} \otimes \begin{bmatrix} b_0 \\[0.3em]  \vdots \\[0.3em]  b_M \end{bmatrix}= \begin{bmatrix} a_0\begin{bmatrix} b_0 \\[0.3em]  \vdots \\[0.3em]  b_M \end{bmatrix} \\[0.3em]  \vdots \\[0.3em]  a_N\begin{bmatrix} b_0 \\[0.3em]  \vdots \\[0.3em]  b_M \end{bmatrix} \end{bmatrix}=\begin{bmatrix} a_0 b_0 \\[0.3em]  \vdots \\[0.3em]  a_0b_M  \\[0.3em]  \vdots \\[0.3em]  a_N b_0 \\[0.3em]  \vdots \\[0.3em]  a_Nb_M \end{bmatrix}.$$}
\end{mydef}

\subsection{Developed Lagrange Functions}
Let the points of $\{x_i\}_{i=0}^{N}$ be the arbitrary real values on the domain of $\Lambda$ and $\psi_i(x)$ be the arbitrary functions where are sufficiently differentiable on $\Lambda$. Furthermore, they satisfy in the following two conditions:
\begin{enumerate}
\item[i.] $\psi_i(x_j) \neq \psi_i(x_i)$ for all $i \neq j$.
\item[ii.] ${\psi_i}'(x_i)\neq 0$ for any $i$.
\end{enumerate}

We now introduce the \textit{\textbf{Developed Lagrange Functions}} (DLFs) as a new class of functions for the interpolation methods as follows:
\begin{equation}\label{eq10}
L_j^\psi(x)=L_j(\psi_0,\psi_1,\cdots,\psi_N,x)=\prod_{\substack{i=0 \\ i\neq j}}^N \frac {\psi_i(x)-\psi_i(x_i)}{\psi_i(x_j)-\psi_i(x_i)},~~~~~~~~0 \leq j \leq N.
\end{equation}
Suppose that 
\begin{equation}\label{eq15}
w^\psi(x)=\prod_{i=0}^N (\psi_i(x)-\psi_i(x_i)).
\end{equation}
It is obvious that 
$$\frac{d}{dx}w^\psi(x)\bigg| _{x=x_j}=(w^\psi(x))'\bigg| _{x=x_j}={\psi_j}'(x_j) \prod_{\substack{i=0 \\ i\neq j}}^N (\psi_i(x_j)-\psi_i(x_i)).$$
Thus, we can rewrite Eq. (\ref{eq10}) as:
\begin{equation}\label{eq16}
L_j^\psi(x)=\mu_j\frac {w^\psi(x)}{\psi_j(x)-\psi_j(x_j)},
\end{equation}
where $\mu_j=\frac {{\psi_j}'(x_j)}{(w^\psi(x))'\big| _{x=x_j}}$.

By choosing the different values of $\psi_i(x)$, many new basic functions are produced at different domains, such as:
\begin{enumerate}
\item If $\psi_i(x)=x$ for all $i$, then the classical Lagrange functions are generated.
\item If $\psi_i(x)=x^\delta$ for all $i$, where $\delta$ is a positive real value, then the fractional Lagrange functions are generated.
\item If $\psi_i(x)=\phi(x)$ for all $i$, where $\phi(x)$ is a certain function, then the generalized Lagrange functions are generated.
\item If $\psi_i(x)=\frac{x-L_i}{x+L_i}$ or $\psi_i(x)=\frac{x}{x+L_i}$ for all $i$, where $L_i$ are the positive real values, then the rational Lagrange functions on the semi-infinite domain $[0,\infty)$ are generated.
\item If $\psi_i(x)=e^{ix}$ for all $i$, then the exponential Lagrange functions on the infinite domain $(-\infty,\infty)$ are generated.
\item If $\psi_i(x)=\sin(ix)$ or $\psi_i(x)=\cos(ix)$ for all $i$, then the Fourier Lagrange functions on the infinite domain $(-\infty,\infty)$ are generated.
\item If $\psi_i(x)=e^{ix}$ for any $i=0,...,j$ and $\psi_i(x)=\sin(ix)$ for any $i=j+1,...,N$, then the exponential-Fourier Lagrange functions on the infinite domain $(-\infty,\infty)$ are generated.
\end{enumerate}

\begin{myrem}\label{remark1}
It is necessary to mention that, the $a$ and $b$ values in Definition \ref{def1} are chosen based on the common domain of the $\psi_i(x)$ functions. Furthermore, these values can be infinite.
\end{myrem}

\subsection{Some properties of the DLFs}
Now, we express some properties of the developed Lagrange functions (DLFs).

\begin{mypro}
It is very important that if 
$$\lim_{x\rightarrow \infty}\psi_i(x)=\beta_i < \infty,~~~~~ for~any~i,$$
then we can obtain for any $j$:
$$\lim_{x\rightarrow \infty}L_j^\psi(x)=\prod_{\substack{i=0 \\ i\neq j}}^N \frac {\beta_i-\psi_i(x_i)}{\psi_i(x_j)-\psi_i(x_i)}< \infty.$$
I.e. if all functions of $\psi_i(x)$ are bounded, then all functions of $L_j^\psi(x)$ are also bounded. This feature is very useful and important to solve problems that are defined on the infinite or semi-infinite domains with the help of the Ritz method. And we know that Lagrange polynomials do not have this feature.
\end{mypro}

\begin{mypro}
The Kronecker delta property is available for the DLFs in Eq. (\ref{eq10}), i.e. $L_j^\psi(x_i)=\delta_{ij}$ for all $i,j$. This property reduces computational costs.
\end{mypro}

\begin{mypro}
The property of $\sum_{j=0}^{N}L_j^\psi(x)=1$ is available for the DLFs in Eq. (\ref{eq10}).
\end{mypro}

\begin{mypro}
According to Eq. (\ref{eq10}), $\psi_i(x)$ are the arbitrary functions where are sufficiently differentiable on $\Lambda$ with two simple conditions over them. Therefore, we can choose the various functions of $\psi_i(x)$ in building new classes of basis functions and apply them to solve various integral/differential equations.
\end{mypro}

\begin{mypro}
There is no condition for the existence of the inverse of the functions of $\psi_i(x)$.
\end{mypro}

\subsection{Developed Lagrange Interpolation}

In the classical interpolation, the interpolation methods play a very important role to reduce computational costs. To achieve this goal, the set of the $\{x_i\}_{i=0}^N$ \textit{interpolation points} (IPs), which are distinct, are considered to construct the Lagrange functions. Moreover, in the pseudospectral (collocation) methods, the residual function is set to zero at the $\{y_i\}_{i=0}^N$ \textit{collocation points} (CPs). Generally, the CPs can be distinct from the IPs, but to reduce the costs of the computational, based on the property of the Kronecker delta, they are chosen the same.\\
With the proper choices of $\psi_i(x)$ and making the functions of $L_j^\psi(x)$, we approximate the solution of a problem as follows:
\begin{equation}\label{eq09}
u(x) \approx u_N(x)=\sum_{j=0}^N u_N(x_j)~L_j^\psi(x)=U^T L(x),
\end{equation}
such that
$$u_N(x) \in \mathcal{L}_N^\psi=span\{L_j^\psi(x):0\leq j \leq N,~~x \in \Lambda\},$$
and
\begin{eqnarray}
L(x)&=&[L_0^\psi(x),~L_1^\psi(x),\cdots,~L_N^\psi(x)]^T, \label{eqn_23tagh_2}\\
U&=&[u_N(x_0),u_N(x_1),\cdots,u_N(x_N)]^T,\label{eqn_23tagh_1} 
\end{eqnarray}
where the symbol of $T$ presents the transpose. 

We use it for solving the equations in the following forms:
\begin{enumerate}
\item[\textbf{1.}] \textbf{One-dimensional form:}

We consider the one-dimensional differential equation as follows:
\begin{eqnarray}
\mathcal{Q}^v u(x)&=&h(x), \label{eq06_0}
\end{eqnarray}
with initial and boundary conditions
\begin{eqnarray}
\frac{d^k}{d x^k}u(a)&=&g_k,~~~~~~~k=0,1,\cdots, v_1 -1, \nonumber \\
\frac{d^n}{d x^n}u(b)&=&f_n,~~~~~~~n=0,1,\cdots, v_2 -1, \nonumber
\end{eqnarray}
where $v$ is a non-negative integer; $v_1$ and $v_2$ are constants which $v_1+v_2=v$; $h(x)$ is a known function; $g_k$, and $f_n$ are real constants; $\mathcal{Q}^v$ presents the differential operator which $v$ shows the highest derivative order with respect to $x$; and $x\in [a,b]=\Lambda$.\\

\item[\textbf{2.}] \textbf{Multi-dimensional form:}

We consider the multi-dimensional differential equation as follows:
\begin{eqnarray}
\mathcal{Q}^{v^{(1)},\cdots,v^{(p)}} u\big(x^{(1)},x^{(2)},\cdots,x^{(p)}\big)=h\big(x^{(1)},x^{(2)},\cdots,x^{(p)}\big), ~~~~~ \textit{\textbf{x}}\in\Lambda \label{eq06}
\end{eqnarray}
with initial conditions 
\begin{eqnarray}
\partial^{k_1}_{x^{(1)}} u\big(a_1,x^{(2)},\cdots,x^{(p)}\big)&=&g_{k_1}\big(x^{(2)},\cdots,x^{(p)}\big),~~~~~~~~~k_1=0,\cdots,v^{(1)}_1-1, \nonumber \\
&\vdots&\nonumber \\
\partial^{k_p}_{x^{(p)}} u\big(x^{(1)},\cdots,x^{(p-1)},a_p\big) &=&g_{k_p}\big(x^{(1)},\cdots,x^{(p-1)}\big),~~~~~~k_p=0,\cdots,v^{(p)}_1-1, \nonumber
\end{eqnarray}
and boundary conditions 
\begin{eqnarray}
\partial^{n_1}_{x^{(1)}} u\big(b_1,x^{(2)},\cdots,x^{(p)}\big)&=&f_{n_1}\big(x^{(2)},\cdots,x^{(p)}\big),~~~~~~~~~~n_1=0,\cdots,v^{(1)}_2-1, \nonumber \\
&\vdots&\nonumber \\
\partial^{n_p}_{x^{(p)}} u\big(x^{(1)},\cdots,x^{(p-1)},b_p\big)&=&f_{n_p}\big(x^{(1)},\cdots,x^{(p-1)}\big),~~~~~~~n_p=0,\cdots,v^{(p)}_2-1, \nonumber
\end{eqnarray}
where $\partial^{k_i}_{x^{(i)}}u=\frac{\partial^{k_i}u}{\partial x^{(i)k_i}}$;  $v^{(1)},\cdots,v^{(p)}$ are non-negative integers; $v^{(1)}_1,v^{(1)}_2,\cdots,v^{(p)}_1,v^{(p)}_2$ are constants which $v^{(i)}_1+v^{(i)}_2=v^{(i)}$; $h(\textit{\textbf{x}})$, $g_{k_i}$, and $f_{n_i}$ are the known functions; $\mathcal{Q}^{v^{(1)},\cdots,v^{(p)}}$ presents the differential operator which $v^{(i)}$ shows the highest derivative order with respect to $x^{(i)}$, $i=1,\cdots,p$; and $\textit{\textbf{x}}=\big(x^{(1)},x^{(2)},\cdots,x^{(p)}\big)$.\\
\end{enumerate}

We now have:
\begin{enumerate}
\item[(1)]
For solving Eq. (\ref{eq06_0}), we substitute the solution of $u_N(x)$ in Eq. (\ref{eq09}) into Eq. (\ref{eq06_0}) and then, put the residual function
\begin{eqnarray}
Res(x)=\mathcal{Q}^v u_N(x)-h(x) \label{eq06_02}
\end{eqnarray}
equal to vanish at the $(N-v+1)$ collocation points, and obtain a system of ($N +1$) nonlinear or linear equations as:
\begin{eqnarray}
\mathcal{Q}^v u_N(x_i)-h(x_i)&=&0,~~~~~i=0,1,\cdots,N-v, \nonumber \\
u_N^{(k)}(a)-g_k&=&0,~~~~~k=0,1,\cdots,v_1-1. \label{eq06_04} \\
u_N^{(n)}(b)-f_n&=&0,~~~~~n=0,1,\cdots,v_2-1. \nonumber
\end{eqnarray}

By solving this system, we can obtain the unknown coefficients $u_N(x_j)$ in (\ref{eq09}), and in result, we can calculate the solution of $u_N(x)$.\\

\item[(2)]
For solving Eq. (\ref{eq06}), we define the $p$-dimensional approximate solution of $u_N(\textit{\textbf{x}})$ as follows:
\begin{eqnarray}
u_N\big(x^{(1)},\cdots,x^{(p)}\big) &=& \sum_{i_1=0}^{N_1}\cdots\sum_{i_p=0}^{N_p} u_N\big(x^{(1)}_{i_1},\cdots,x^{(p)}_{i_p}\big) ~L_{i_1}^{\psi_{x^{(1)}}}(x^{(1)})\cdots L_{i_p}^{\psi_{x^{(p)}}}(x^{(p)}) \nonumber \\
&=&U^T\Big(L_{x^{(1)}}(x^{(1)})\otimes\cdots \otimes L_{x^{(p)}}(x^{(p)}))\Big),\label{eq06_22}
\end{eqnarray}
where
\begin{eqnarray}
L_{x^{(i)}}(x^{(i)})&=&\Big[L_0^{\psi_{x^{(i)}}}(x^{(i)}),~L_1^{\psi_{x^{(i)}}}(x^{(i)}),\cdots,~L_{N_i}^{\psi_{x^{(i)}}}(x^{(i)})\Big]^T,~~~~~for~i=1,\cdots,p, \nonumber\\
U &=&\Big[u_N\big(x^{(1)}_{0},x^{(2)}_{0},\cdots,x^{(p-1)}_{0},x^{(p)}_{0}\big),\cdots, u_N\big(x^{(1)}_{0},x^{(2)}_{0},\cdots,x^{(p-1)}_{0},x^{(p)}_{N_p}\big), \nonumber\\
&&~u_N\big(x^{(1)}_{0},x^{(2)}_{1},\cdots,x^{(p-1)}_{0},x^{(p)}_{0}\big),\cdots, u_N\big(x^{(1)}_{0},x^{(2)}_{1},\cdots,x^{(p-1)}_{0},x^{(p)}_{N_p}\big), \nonumber\\
&&~ \vdots \nonumber\\
&&~u_N\big(x^{(1)}_{N_1},x^{(2)}_{N_2},\cdots,x^{(p-1)}_{N_{p-1}},x^{(p)}_{0}\big),\cdots, u_N\big(x^{(1)}_{N_1},x^{(2)}_{N_2},\cdots,x^{(p-1)}_{N_{p-1}},x^{(p)}_{N_p}\big)\Big]^T. \nonumber
\end{eqnarray}

We now substitute the solution of $u_N(\textit{\textbf{x}})$ into Eq. (\ref{eq06}), and then, put the residual function
\begin{eqnarray}
Res(\textit{\textbf{x}})=\mathcal{Q}^{v^{(1)},\cdots,v^{(p)}} u_N(\textit{\textbf{x}})-h(\textit{\textbf{x}}), \label{eq06_23}
\end{eqnarray}
equal to vanish at the $(N_1- v^{(1)}+1 )\cdots(N_p-v^{(p)}+1)$ collocation points, and obtain a system of $(N_1+1)\cdots(N_p+1)$ nonlinear or linear equations as:
\begin{eqnarray}
\mathcal{Q}^{v^{(1)},\cdots,v^{(p)}} u_N\big(x^{(1)}_{i_1},\cdots,x^{(p)}_{i_p}\big)-h\big(x^{(1)}_{i_1},\cdots,x^{(p)}_{i_p}\big)&=&0,\label{eq06_24} \\
\partial^{k_1}_{x^{(1)}} u\big(a_1,x^{(2)}_{i_2},\cdots,x^{(p)}_{i_p}\big) - g_{k_1}\big(x^{(2)}_{i_2},\cdots,x^{(p)}_{i_p}\big)&=&0, ~~~~ k_1=0,\cdots,v^{(1)}_1-1,\nonumber \\
&\vdots&\nonumber \\
\partial^{k_p}_{x^{(p)}} u\big(x^{(1)}_{i_1},\cdots,x^{(p-1)}_{i_{p-1}},a_p\big)  - g_{k_p}\big(x^{(1)}_{i_1},\cdots,x^{(p-1)}_{i_{p-1}}\big) &=&0, ~~~~ k_p=0,\cdots,v^{(p)}_1-1,\nonumber \\
\partial^{n_1}_{x^{(1)}} u\big(b_1,x^{(2)}_{i_2},\cdots,x^{(p)}_{i_p}\big)- f_{n_1}\big(x^{(2)}_{i_2},\cdots,x^{(p)}_{i_p}\big)&=&0, ~~~~ n_1=0,\cdots,v^{(1)}_2-1, \nonumber \\
&\vdots&\nonumber \\
\partial^{n_p}_{x^{(p)}} u\big(x^{(1)}_{i_1},\cdots,x^{(p-1)}_{i_{p-1}},b_p\big) - f_{n_p}\big(x^{(1)}_{i_1},\cdots,x^{(p-1)}_{i_{p-1}}\big) &=&0, ~~~~ n_p=0,\cdots,v^{(p)}_2-1, \nonumber
\end{eqnarray}
where $i_j=0,\cdots,N_j- v^{(j)}$ and $ j=1,\cdots,p$.

By solving this system, we can obtain the unknown coefficients $u_N\big(x^{(1)}_{i_1},\cdots,x^{(p)}_{i_p}\big)$ in (\ref{eq06_22}), and in result, we can calculate the solution of $u_N(\textit{\textbf{x}})$.
\end{enumerate}
The derivatives in Eqs. (\ref{eq06_02}) - (\ref{eq06_24}) can either be calculated directly or calculated using operational matrices. In the next section, In order to reduce the computational costs, we calculate the operational matrices of the derivative for the developed Lagrange functions, which can be applied in the above method. We call this method the \textit{\textbf{Developed Lagrange Interpolation}} (DLI).

\section{Derivative Operational Matrices}\label{sec_deri_matrix}
In this section, at first, we calculate the derivative operational matrix of $\textbf{D}^{(1)}$ for the DLFs, and then, obtain a recursive relation for calculating $\textbf{D}^{(m)}$ for any $m\in \mathbb{N}$.

By using Eq. (\ref{eq09}), we can obtain ($k=0,1,\cdots,N$):
\begin{equation}\label{eq15_1}
\frac{d^m}{dx^m}u_N(x)\bigg| _{x=x_k}=\sum_{j=0}^N u_N(x_j)~\frac{d^m}{dx^m}L_j^\psi(x)\bigg| _{x=x_k} =\sum_{j=0}^N u_N(x_j)~\textbf{D}^{(m)}_{kj},
\end{equation}
i.e.
\begin{equation}\label{eq15_2}
U_N^{(m)}=\textbf{D}^{(m)}U_N,
\end{equation}
where
\begin{eqnarray}
U_N&=&[u_N(x_0),u_N(x_1),\cdots,u_N(x_N)]^T,\nonumber \\
U_N^{(m)}&=&[u_N^{(m)}(x_0),u_N^{(m)}(x_1),\cdots,u_N^{(m)}(x_N)]^T,\nonumber
\end{eqnarray}
and $\textbf{D}^{(m)}=[\textbf{D}^{(m)}_{kj}]$ which $\textbf{D}^{(m)}_{kj}=\frac{d^m}{dx^m}L_j^\psi(x)\Big| _{x=x_k}$.

\subsection{Derivative operational matrix $\textbf{D}^{(1)}$}

We now take the first derivative $\textbf{D}^{(1)}$ of the both sides of Eq. (\ref{eq16}):
$$\frac{d}{dx}L_j^\psi(x)=\mu_j\frac {(w^\psi(x))'(\psi_j(x)-\psi_j(x_j))- {\psi_j}'(x)w^\psi(x)}{(\psi_j(x)-\psi_j(x_j))^2}.$$
Hence, for $k\neq j$, we can obtain:
\begin{equation}\label{eq17}
\textbf{D}^{(1)}_{kj}=\frac{d}{dx}L_j^\psi(x_k)=\frac {(w^\psi(x))'\big| _{x=x_k}}{(w^\psi(x))'\big| _{x=x_j}}\frac {{\psi_j}'(x_j)}{\psi_j(x_k)-\psi_j(x_j)},
\end{equation}
and for $k= j$, by the L'Hopital's rule, we can obtain:
\begin{eqnarray}
\textbf{D}^{(1)}_{jj}&=&\lim_{x\to x_j}\frac{d}{dx}L_j^\psi(x)\overset{Hop.}{=}\lim_{x\to x_j} \mu_j\frac {(w^\psi(x))''(\psi_j(x)-\psi_j(x_j))-{\psi_j}''(x)w^\psi(x)}{2{\psi_j}'(x)(\psi_j(x)-\psi_j(x_j))}\nonumber\\
&\overset{Hop.}{=}&\frac{(w^\psi(x))''\big| _{x=x_j}}{2(w^\psi(x))'\big| _{x=x_j}}-\frac{{\psi_j}''(x_j)}{2{\psi_j}'(x_j)}.
\end{eqnarray}
So, we can obtain the following theorem:
\begin{mythe}\label{theo4}
By using Eqs. (\ref{eq16}) and (\ref{eq09}), and let $\textbf{D}^{(1)}=[\textbf{D}^{(1)}_{kj}]$, where $\textbf{D}^{(1)}_{kj}=\frac{d}{dx}L_j^\psi(x)\Big| _{x=x_k}$, be the derivative operational matrix of the first order in (\ref{eq15_1}). Then:
\begin{equation}\label{eq20}
\textbf{D}^{(1)}_{kj}=\left\{
\begin{array}{l}
{\frac {(w^\psi(x))'\big| _{x=x_k}}{(w^\psi(x))'\big| _{x=x_j}}\frac {{\psi_j}'(x_j)}{\psi_j(x_k)-\psi_j(x_j)},~~~~~~~~~~~~~~~~~k\neq j,}\\
{}\\
{\frac{(w^\psi(x))''\big| _{x=x_j}}{2(w^\psi(x))'\big| _{x=x_j}}-\frac{{\psi_j}''(x_j)}{2{\psi_j}'(x_j)},~~~~~~~~~~~~~~~~~~k=j. }
\end{array}\right.
\end{equation}
where $0 \leq k,j \leq N$. $\square$
\end{mythe}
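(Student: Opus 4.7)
The plan is to work from the product-free representation of the DLFs supplied by Eq. (\ref{eq16}), namely $L_j^\psi(x)=\mu_j\,w^\psi(x)/(\psi_j(x)-\psi_j(x_j))$ with $\mu_j=\psi_j'(x_j)/(w^\psi)'(x_j)$, and differentiate once with the quotient rule to get
$$(L_j^\psi)'(x)=\mu_j\,\frac{(w^\psi)'(x)\,(\psi_j(x)-\psi_j(x_j))-\psi_j'(x)\,w^\psi(x)}{(\psi_j(x)-\psi_j(x_j))^2}.$$
Both cases of the theorem will follow by evaluating this expression at $x=x_k$; the off-diagonal case is a direct substitution, while the diagonal case requires L'H\^opital because both the numerator and the denominator vanish at $x_j$.

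For $k\neq j$, I would substitute $x=x_k$ and use the key vanishing property that $w^\psi(x_k)=0$ (since the product defining $w^\psi$ contains the factor $\psi_k(x_k)-\psi_k(x_k)=0$), while condition (i) guarantees $\psi_j(x_k)-\psi_j(x_j)\neq 0$ so the denominator stays nonzero. The second term in the numerator drops out, one factor of $\psi_j(x_k)-\psi_j(x_j)$ cancels, and substituting $\mu_j=\psi_j'(x_j)/(w^\psi)'(x_j)$ delivers exactly the top line of (\ref{eq20}).

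For $k=j$, both numerator and denominator vanish at $x_j$, so I would apply L'H\^opital's rule. A cleanly favourable cancellation happens: the derivative of the numerator is $(w^\psi)''(x)(\psi_j(x)-\psi_j(x_j))+(w^\psi)'(x)\psi_j'(x)-\psi_j''(x)w^\psi(x)-\psi_j'(x)(w^\psi)'(x)$, in which the two $(w^\psi)'\psi_j'$ terms cancel, leaving only $(w^\psi)''(x)(\psi_j(x)-\psi_j(x_j))-\psi_j''(x)w^\psi(x)$. The denominator differentiates to $2\psi_j'(x)(\psi_j(x)-\psi_j(x_j))$. This matches the intermediate step shown right before the theorem; the new form is still $0/0$ at $x_j$ (numerator kills because $\psi_j(x_j)-\psi_j(x_j)=0$ and $w^\psi(x_j)=0$, denominator kills from the same first factor), so a second application of L'H\^opital is needed.

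The main obstacle is the bookkeeping in this second L'H\^opital step: differentiating the numerator produces four terms, two of which vanish at $x_j$ because of the factors $\psi_j(x_j)-\psi_j(x_j)$ and $w^\psi(x_j)$, leaving only $(w^\psi)''(x_j)\psi_j'(x_j)-\psi_j''(x_j)(w^\psi)'(x_j)$; the denominator differentiates to $2\psi_j''(x)(\psi_j(x)-\psi_j(x_j))+2(\psi_j'(x))^2$, which at $x_j$ reduces to $2(\psi_j'(x_j))^2$. Multiplying by $\mu_j=\psi_j'(x_j)/(w^\psi)'(x_j)$ and splitting the resulting fraction into two pieces yields the diagonal formula $(w^\psi)''(x_j)/(2(w^\psi)'(x_j))-\psi_j''(x_j)/(2\psi_j'(x_j))$, which is the second line of (\ref{eq20}). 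Condition (ii) ($\psi_j'(x_j)\neq 0$) together with $(w^\psi)'(x_j)=\psi_j'(x_j)\prod_{i\neq j}(\psi_i(x_j)-\psi_i(x_i))\neq 0$ (by condition (i)) ensures all denominators are nonzero throughout, completing the proof.
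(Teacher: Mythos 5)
Your proposal is correct and follows essentially the same route as the paper: differentiate the representation $L_j^\psi(x)=\mu_j\,w^\psi(x)/(\psi_j(x)-\psi_j(x_j))$ by the quotient rule, evaluate directly at $x_k$ using $w^\psi(x_k)=0$ for the off-diagonal entries, and apply L'H\^opital's rule twice for the diagonal entries, with the same cancellations and the same use of conditions (i) and (ii) to keep the denominators nonzero. Nothing further is needed.
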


\subsection{Derivative operational matrix $\textbf{D}^{(m)}$ for any $m\in \mathbb{N}$}

In the book of \cite{ref029} has been proved that the high-order derivative operational matrices for the classical Lagrange polynomials can calculate by the following theorem:
\begin{mythe}\label{theo5}
Suppose that the first-order derivative operational matrix of the classical Lagrange polynomials is exist, then, one can calculate the high-order derivative operational matrices for them as
$$\textbf{D}^{(m)}=\textbf{D}^{(m-1)}\textbf{D}^{(1)}= \underbrace{\textbf{D}^{(1)}\textbf{D}^{(1)}\cdots\textbf{D}^{(1)}}_{m~times}= \big(\textbf{D}^{(1)}\big)^{m},~~~~m\geq2,$$
and
$$\frac{d^m}{dx^m}U_N=\big(\textbf{D}^{(1)}\big)^{m}U_N.$$
\end{mythe}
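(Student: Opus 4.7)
The plan is to exploit the single property that distinguishes the classical Lagrange polynomials from a generic DLF family: the span of $\{L_0,\ldots,L_N\}$ is exactly the space of polynomials of degree at most $N$, which is closed under differentiation. This closure is precisely what permits the iteration of $\textbf{D}^{(1)}$.

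First I would observe that since every $L_j$ is a polynomial of degree $N$, the approximant $u_N(x)=\sum_{j=0}^N u_N(x_j)L_j(x)$ lies in the space of polynomials of degree at most $N$, and so does each derivative $u_N^{(m)}(x)$ (its degree is in fact at most $N-m$). Because the classical Lagrange basis at $N+1$ distinct nodes reproduces any polynomial of degree at most $N$ from its nodal values, this yields the key expansion
$$u_N^{(m)}(x)=\sum_{k=0}^{N}u_N^{(m)}(x_k)\,L_k(x).$$
Differentiating once more and evaluating at a node $x_j$ gives
$$u_N^{(m+1)}(x_j)=\sum_{k=0}^{N}u_N^{(m)}(x_k)\,L_k'(x_j)=\sum_{k=0}^{N}\textbf{D}^{(1)}_{jk}\,u_N^{(m)}(x_k),$$
which in vector form is the recursion $U_N^{(m+1)}=\textbf{D}^{(1)}U_N^{(m)}$.

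The theorem then follows by a straightforward induction on $m$: starting from $U_N^{(1)}=\textbf{D}^{(1)}U_N$ (Theorem \ref{theo4}) and applying the recursion yields $U_N^{(m)}=(\textbf{D}^{(1)})^m U_N$. Since this holds for every admissible data vector $U_N$ (one may take $u_N(x_j)=\delta_{ji}$ to isolate the $i$-th column on both sides), the matrix identity $\textbf{D}^{(m)}=(\textbf{D}^{(1)})^m$ follows, and therefore also $\frac{d^m}{dx^m}U_N=(\textbf{D}^{(1)})^m U_N$.

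The real conceptual content, and the step I expect to be the obstacle when one attempts to generalize this argument, is the nodal reproduction identity for $u_N^{(m)}$. This step is trivial for polynomials, but fails for general DLFs, because differentiation typically maps $\mathcal{L}_N^\psi$ outside itself. That is precisely why the paper later insists that $\textbf{D}^{(m)}=(\textbf{D}^{(1)})^m$ does not hold for arbitrary $\psi_i$ and must be replaced by a genuinely different recurrence.
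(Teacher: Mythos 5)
Your proof is correct and is essentially the argument the paper relies on: the paper gives no proof of its own here but simply cites Shen--Tang--Wang (page 65), and the standard proof there is exactly your route --- closure of $\mathbb{P}_N$ under differentiation gives the nodal reproduction $u_N^{(m)}(x)=\sum_{k}u_N^{(m)}(x_k)L_k(x)$, hence the recursion $U_N^{(m+1)}=\textbf{D}^{(1)}U_N^{(m)}$, and the matrix identity follows by induction and column isolation. Your closing remark about why the reproduction step fails for general $\psi_i$ is precisely the observation the paper uses to motivate its developed recurrence in Theorem \ref{theo6}.
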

\begin{proof}
See Ref. \cite{ref029} (page 65). 
\end{proof}
With a simple review, we can see that Theorem \ref{theo5} is not established for the DLFs, for this reason, we now develop it for the DLFs.\\
According to the definition of $L_k^\psi(x)$, we can obtain:
\begin{eqnarray}
&&\textbf{D}^{(1)}_{ik}={\psi_i}'(x_i){L_k^\psi}'(x_i), \label{eq22}\\
&&\textbf{D}^{(2)}_{ik}={\psi_i}'^2(x_i){L_k^\psi}''(x_i)+{\psi_i}''(x_i){L_k^\psi}'(x_i), \label{eq23}\\
&&\textbf{D}^{(3)}_{ik}={\psi_i}'^3(x_i){L_k^\psi}'''(x_i)+3{\psi_i}'(x_i){\psi_i}''(x_i){L_k^\psi}''(x_i)+ {\psi_i}'''(x_i){L_k^\psi}'(x_i). \label{eq24}
\end{eqnarray}
Using Eq. (\ref{eq09}), we have:
$$u_N'(x)=\sum_{j=0}^N u_N(x_j)~\frac{d}{dx}{L_j^\psi}(x).$$
We now set $u_N(x)={L_k^\psi}'(x)$ and then taking $x=x_i$:
$$ {L_k^\psi}''(x_i)=\sum_{j=0}^N {L_k^\psi}'(x_j)~{L_j^\psi}'(x_i),$$
and according to Eqs. (\ref{eq22}) and (\ref{eq23}), we can obtain:
$$\textbf{D}^{(2)}_{ik}={\psi_i}'(x_i)\sum_{j=0}^N \textbf{D}^{(1)}_{ij}\frac{1}{{\psi_j}'(x_j)}\textbf{D}^{(1)}_{jk}+\frac{{\psi_i}''(x_i)}{{\psi_i}'(x_i)}\textbf{D}^{(1)}_{ik},$$
or in the matrix form:
\begin{eqnarray}
\textbf{D}^{(2)}&=&
\left(\begin{array}{ccc}
{\psi_0}'(x_0) & & 0 \\
 & \ddots & \\
0 & & {\psi_N}'(x_N)
\end{array}\right) \times
\left(\begin{array}{ccc}
\textbf{D}^{(1)}_{00} & \cdots & \textbf{D}^{(1)}_{0N} \\
 \vdots& \ddots & \vdots \\
\textbf{D}^{(1)}_{N0} & \cdots & \textbf{D}^{(1)}_{NN}
\end{array}\right)\times
\left(\begin{array}{ccc}
\frac{1}{{\psi_0}'(x_0)} & & 0 \\
 & \ddots & \\
0 & & \frac{1}{{\psi_N}'(x_N)}
\end{array}\right) \nonumber \\
&&~\times
\left(\begin{array}{ccc}
\textbf{D}^{(1)}_{00} & \cdots & \textbf{D}^{(1)}_{0N} \\
 \vdots& \ddots & \vdots \\
\textbf{D}^{(1)}_{N0} & \cdots & \textbf{D}^{(1)}_{NN}
\end{array}\right)+
\left(\begin{array}{ccc}
\frac{{\psi_0}''(x_0)}{{\psi_0}'(x_0)} & & 0 \\
 & \ddots & \\
0 & & \frac{{\psi_N}''(x_N)}{{\psi_N}'(x_N)}
\end{array}\right) \times
\left(\begin{array}{ccc}
\textbf{D}^{(1)}_{00} & \cdots & \textbf{D}^{(1)}_{0N} \\
 \vdots& \ddots & \vdots \\
\textbf{D}^{(1)}_{N0} & \cdots & \textbf{D}^{(1)}_{NN}
\end{array}\right),\nonumber
\end{eqnarray}
i.e.
\begin{eqnarray}
\textbf{D}^{(2)}&=&\textbf{P}\textbf{D}^{(1)}\textbf{P}^{-1}\textbf{D}^{(1)} +\textbf{P}^{(1)}\textbf{P}^{-1}\textbf{D}^{(1)} \nonumber\\
&=&\Big(\textbf{P}\textbf{D}^{(1)}+\textbf{P}^{(1)}\Big)\textbf{P}^{-1}\textbf{D}^{(1)}, \label{eq25}
\end{eqnarray}
where $\textbf{P}=diag({\psi_0}'(x_0),\cdots,{\psi_N}'(x_N))$, and $\textbf{P}^{(1)}$ and $\textbf{P}^{-1}$ are the first-order derivative and inverse of $\textbf{P}$, respectively.

Moreover, according to Eqs. (\ref{eq22}) - (\ref{eq24}), we can obtain:
\begin{eqnarray}
\textbf{D}^{(3)}_{ik}&=&{\psi_i}'^2(x_i)\sum_{j=0}^N \textbf{D}^{(1)}_{ij}\frac{1}{{\psi_j}'(x_j)}\big(\sum_{r=0}^N \textbf{D}^{(1)}_{jr}\frac{1}{{\psi_r}'(x_r)}\textbf{D}^{(1)}_{rk}\big)\nonumber \\
 &&+3{\psi_i}''(x_i)\sum_{j=0}^N \textbf{D}^{(1)}_{ij}\frac{1}{{\psi_j}'(x_j)}\textbf{D}^{(1)}_{jk}+\frac{{\psi_i}'''(x_i)}{{\psi_i}'(x_i)}\textbf{D}^{(1)}_{ik}. \nonumber
\end{eqnarray}
i.e.
\begin{eqnarray}
\textbf{D}^{(3)}&=&\textbf{P}^2\textbf{D}^{(1)}\textbf{P}^{-1}\textbf{D}^{(1)}\textbf{P}^{-1}\textbf{D}^{(1)} +3\textbf{P}^{(1)}\textbf{D}^{(1)}\textbf{P}^{-1}\textbf{D}^{(1)}
+\textbf{P}^{(2)}\textbf{P}^{-1}\textbf{D}^{(1)} \nonumber\\
&=&\Big(\textbf{P}\textbf{D}^{(2)}+2\textbf{P}^{(1)}\textbf{D}^{(1)}+\textbf{P}^{(2)}\Big)\textbf{P}^{-1}\textbf{D}^{(1)}. \label{eq25}
\end{eqnarray}
So, we can obtain the following theorem, where is a development of Theorem \ref{theo5}:

\begin{mythe}\label{theo6}
Suppose that the first-order derivative operational matrix of the DLFs is exist, then, one can calculate the high-order derivative operational matrices $\textbf{D}^{(m)}$, $m\in \mathbb{N}$, for them as
\begin{eqnarray}
\textbf{D}^{(2)}&=&\Big(\textbf{P}\textbf{D}^{(1)}+\textbf{P}^{(1)}\Big)\textbf{P}^{-1}\textbf{D}^{(1)}, \nonumber\\
\textbf{D}^{(3)}&=& \Big(\textbf{P}\textbf{D}^{(2)}+2\textbf{P}^{(1)}\textbf{D}^{(1)}+\textbf{P}^{(2)}\Big)\textbf{P}^{-1}\textbf{D}^{(1)}, \nonumber
\end{eqnarray}
and in general case
\begin{eqnarray}
\textbf{D}^{(m)}=\Bigg(\sum_{k=0}^{m-1}\binom{m-1}{k}\textbf{P}^{(k)}\textbf{D}^{(m-1-k)}\Bigg)\textbf{P}^{-1}\textbf{D}^{(1)},~~~~m\geq2,
\end{eqnarray}
where $\textbf{P}^{(k)}=diag(\psi^{(k+1)}(x_0),\cdots,\psi^{(k+1)}(x_N))$ and $\textbf{D}^{(1)}$ is defined in Theorem \ref{theo4}.
$\square$
\end{mythe}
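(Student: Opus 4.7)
The plan is to establish the formula by induction on $m$, with the base cases $m=2$ and $m=3$ already contained in the displays immediately preceding the theorem; the real work lies in the inductive step from $m$ to $m+1$.

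First I would prove a pointwise Faa di Bruno expansion
$$\textbf{D}^{(m)}_{ik} = \sum_{r=1}^{m} B_{m,r}\!\left(\psi_i'(x_i),\ldots,\psi_i^{(m-r+1)}(x_i)\right) \bigl(L_k^\psi\bigr)^{(r)}(x_i),$$
where the $B_{m,r}$ are the partial Bell polynomials and $\bigl(L_k^\psi\bigr)^{(r)}(x_i)$ denotes the $r$-fold ``intrinsic'' derivative in the $\psi_i$-variable used implicitly in equations (\ref{eq22})--(\ref{eq24}). This is established by induction on $m$: one extra differentiation in $x$ produces, via the ordinary Leibniz rule, the standard Bell recurrence $B_{m+1,r}=\sum_{j}\binom{m}{j}\psi_i^{(j+1)}B_{m-j,r-1}$, and the cases $m=1,2,3$ recover (\ref{eq22})--(\ref{eq24}) exactly.

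Second I would eliminate the intrinsic derivatives using the spectral identity $v_N'(x_i)=\sum_j v_N(x_j)\textbf{D}^{(1)}_{ij}$ for $v_N\in\mathcal{L}_N^\psi$: this is the substitution the paper already performs for $r=2$ by setting $v_N=(L_k^\psi)'$ and rewriting $v_N(x_j)$ via (\ref{eq22}) as $\textbf{D}^{(1)}_{jk}/\psi_j'(x_j)$. Iterating this step $r-1$ times converts each $\bigl(L_k^\psi\bigr)^{(r)}(x_i)$ into the $(i,k)$-entry of $\bigl(\textbf{P}^{-1}\textbf{D}^{(1)}\bigr)^{r}$, up to the diagonal $\textbf{P}$-factors already pulled to the front. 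Combining the two expansions and collecting terms of equal order yields an identity of the shape $\textbf{D}^{(m)}=(\cdots)\textbf{P}^{-1}\textbf{D}^{(1)}$, in which the coefficient of $\textbf{P}^{(k)}\textbf{D}^{(m-1-k)}$ reduces to $\binom{m-1}{k}$ via Pascal's identity $\binom{m-1}{k}+\binom{m-1}{k-1}=\binom{m}{k}$. This is precisely what the Leibniz rule produces when differentiating a generic summand $\textbf{P}^{(k)}\textbf{D}^{(m-1-k)}$ in the step from $m$ to $m+1$, which closes the induction.

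The main obstacle I expect is the combinatorial bookkeeping tying the Bell polynomials to the binomial coefficients appearing in the theorem. A cleaner route that I would try first is to define the operator $\mathcal{T}:=\textbf{P}\tfrac{d}{dx}+\textbf{P}^{(1)}$ acting on matrix-valued functions of $x$, and to prove by induction that $\mathcal{T}^{m-1}$ applied to $\textbf{P}^{-1}\textbf{D}^{(1)}$ equals both $\sum_{k=0}^{m-1}\binom{m-1}{k}\textbf{P}^{(k)}\textbf{D}^{(m-1-k)}\textbf{P}^{-1}\textbf{D}^{(1)}$ (directly, by Leibniz) and $\textbf{D}^{(m)}$ (matching with the $m=2,3$ calculations already carried out). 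This bypasses the explicit Bell-polynomial identity and delivers the theorem using only Pascal and Leibniz, although it still requires the same spectral-identity substitution to identify $\mathcal{T}^{m-1}\bigl(\textbf{P}^{-1}\textbf{D}^{(1)}\bigr)$ with $\textbf{D}^{(m)}$ at each node.
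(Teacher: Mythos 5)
Your proposal is sound in its main (Bell--polynomial) route, and it actually goes further than the paper does: the paper's entire argument for Theorem \ref{theo6} consists of the explicit computations of $\textbf{D}^{(2)}$ and $\textbf{D}^{(3)}$ via Eqs. (\ref{eq22})--(\ref{eq24}) and the spectral substitution, after which the general recurrence is asserted by pattern-matching; no induction or general-$m$ argument appears, and the theorem carries only a $\square$. Your first route supplies exactly the missing step: writing $\textbf{D}^{(m)}=\sum_{r=1}^{m}\textbf{B}_{m,r}\big(\textbf{P}^{-1}\textbf{D}^{(1)}\big)^{r}$ with $\textbf{B}_{m,r}$ the diagonal matrix of partial Bell polynomial values, then inserting the standard recurrence $B_{m,r}=\sum_{j=0}^{m-r}\binom{m-1}{j}\,\psi^{(j+1)}B_{m-1-j,r-1}$ and swapping the order of summation (with the conventions $\textbf{D}^{(0)}=I$ and $\textbf{B}_{0,0}=1$) yields $\textbf{D}^{(m)}=\big(\sum_{k=0}^{m-1}\binom{m-1}{k}\textbf{P}^{(k)}\textbf{D}^{(m-1-k)}\big)\textbf{P}^{-1}\textbf{D}^{(1)}$. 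So your argument, unlike the paper's, actually proves the stated general case, and for $m=2,3$ it coincides with the paper's computation.

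Two caveats. First, the ``cleaner'' operator route does not parse as written: $\textbf{P}^{-1}\textbf{D}^{(1)}$ is a constant matrix, so the $\tfrac{d}{dx}$ part of $\mathcal{T}=\textbf{P}\tfrac{d}{dx}+\textbf{P}^{(1)}$ annihilates it; you would have to apply $\mathcal{T}$ to the matrix-valued function $x\mapsto\big[\tfrac{d^{m}}{dx^{m}}L_{k}^{\psi}(x)\big]$ before evaluating at the nodes, and since the diagonal factors in $\mathcal{T}$ are indexed by the evaluation node the two operations do not commute cleanly --- stick with the Bell route. Second, both you and the paper rely on substituting the intrinsic derivative $(L_k^\psi)^{(r)}$ for $u_N$ in the spectral identity derived from Eq. (\ref{eq09}); that identity is exact only for $u_N\in\mathcal{L}_N^\psi$, and the membership of the intrinsic derivatives in that span (indeed, the very definition of the intrinsic derivative) is clear only when all $\psi_i$ equal a common $\phi$, so that $L_k^\psi=\ell_k\circ\phi$ with $\ell_k\in\mathbb{P}_N$ and hence $\ell_k^{(r)}\circ\phi\in\mathcal{L}_N^\psi$. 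That restriction is implicit in the paper (note the subscript-free $\psi$ in the definition of $\textbf{P}^{(k)}$), so you inherit it rather than introduce it, but a complete write-up should state it explicitly as a hypothesis.
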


\begin{myrem}
In the case of $\psi_i(x)=x$, Theorem \ref{theo6} converts to the classical Theorem \ref{theo5}.
\end{myrem}

\begin{myrem}\label{remark5}
It is necessary to mention that in Theorem \ref{theo6}, ${\psi_i}'(x_i)$ should be non-zero for any $i$. I.e., $\psi_i(x)$ and the $x_i$'s points must be selected so that the $\textbf{P}$ matrix be invertible.
\end{myrem}
\begin{myrem}\label{remark8}
In the proper choices of $\psi_i(x)$, the physical conditions of the equation can be important, such as if the equation is defined on the semi-infinite domain and has the algebraic properties then we can choose $\psi_i(x)=\frac{x-L_i}{x+L_i}$ or $\frac{x}{x+L_i}$, and if has the exponential properties then $\psi_i(x)=e^{\pm L_ix}$; if the equation is on the infinite domain then $\psi_i(x)=e^{\pm L_ix}$; and if the equation is on the finite domain then $\psi_i(x)=x^\delta$ or $2(\frac{x}{L_i})^\delta-1$, and etc.
\end{myrem}




\section{Error analysis}\label{sec_error}
In this section, we provide the error analysis to the DLI. We know that the error in the classical Lagrange interpolation can calculate by using the following theorem \cite{TrefethenBook}:
\begin{mythe}\label{theo_Trefethen}
"Let $u(x)$ be analytic in a region $\Omega$ containing distinct points $x_0,\cdots, x_N$; and let $\Gamma$ be a contour in $\Omega$ enclosing these points in the positive direction. The polynomial interpolant $u_N(x)\in\mathbb{P}_N$ to $u(x)$ at $\{x_j\}$ is
\begin{equation}
u_N(x) =\frac{1}{2\pi i}\int_{\Gamma}\frac{u(t)\big(w(t)-w(x)\big)}{w(t)\big(t-x \big)}dt,
\end{equation}
and if $x$ is enclosed by $\Gamma$, the error in the interpolant is
\begin{equation}
u(x)-u_N(x) =\frac{1}{2\pi i}\int_{\Gamma}\frac{w(x)u(t)}{w(t)\big(t-x \big)}dt,
\end{equation}
where $w(x)=\prod_{j=0}^N (x-x_j)$."
\end{mythe}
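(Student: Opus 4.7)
The plan is to derive both formulas from Cauchy's integral representation by splitting the Cauchy kernel $\tfrac{1}{t-x}$ into a ``polynomial part'' and an ``error part''. Since $u$ is analytic in $\Omega$ and $x$ lies inside the positively oriented contour $\Gamma\subset\Omega$, I would begin with
\[
u(x)=\frac{1}{2\pi i}\int_{\Gamma}\frac{u(t)}{t-x}\,dt .
\]

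Next, I would employ the purely algebraic identity
\[
\frac{1}{t-x}=\frac{w(t)-w(x)}{w(t)(t-x)}+\frac{w(x)}{w(t)(t-x)},
\]
valid whenever $t\neq x$ and $w(t)\neq 0$, both of which hold for $t\in\Gamma$ since $\Gamma$ encloses the nodes $x_0,\dots,x_N$ in its interior. Multiplying by $u(t)/(2\pi i)$ and integrating over $\Gamma$ decomposes $u(x)$ into the sum of the two integrals appearing in the theorem.

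The central step, and the one that really needs justification, is showing that the first piece
\[
u_N(x):=\frac{1}{2\pi i}\int_{\Gamma}\frac{u(t)\bigl(w(t)-w(x)\bigr)}{w(t)(t-x)}\,dt
\]
truly is a polynomial in $\mathbb{P}_N$ that interpolates $u$ at each $x_j$. For the polynomial character, I would note that $(t-x)$ divides $w(t)-w(x)$ and that the quotient $\bigl(w(t)-w(x)\bigr)/(t-x)$ is, as a function of $x$ with $t$ held fixed, a polynomial of degree at most $N$; integrating in $t$ against $u(t)/w(t)$ therefore leaves a polynomial in $x$ of degree at most $N$. For the interpolation property, I would substitute $x=x_j$: then $w(x_j)=0$, so $w(t)-w(x_j)=w(t)$, the $w(t)$ factors cancel, and the integral reduces by Cauchy's formula to $u(x_j)$. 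Uniqueness of polynomial interpolation at $N+1$ distinct nodes then identifies $u_N(x)$ with the Lagrange interpolant.

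The error formula is then immediate: subtracting the established identity for $u_N(x)$ from the Cauchy representation of $u(x)$ leaves exactly the second integral. The only genuinely delicate point is the polynomial/interpolation verification in the middle step; everything else reduces to Cauchy's formula and the algebraic splitting of the kernel, so I do not anticipate any real obstacle beyond tracking the degree of $\bigl(w(t)-w(x)\bigr)/(t-x)$ in $x$.
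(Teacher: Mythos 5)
Your argument is correct and is essentially the same proof the paper relies on: the paper does not prove Theorem \ref{theo_Trefethen} itself but cites Theorem 11.1 of Trefethen's book, whose proof is exactly your route --- split the Cauchy kernel via $\tfrac{1}{t-x}=\tfrac{w(t)-w(x)}{w(t)(t-x)}+\tfrac{w(x)}{w(t)(t-x)}$, check that the first piece is a degree-$\le N$ polynomial in $x$ agreeing with $u$ at the nodes (so uniqueness of interpolation identifies it, for all $x$, with $u_N$), and subtract from Cauchy's formula to get the error term. No gaps.
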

\begin{proof}
See Theorem 11.1, Page 103 in Ref. \cite{TrefethenBook}.
\end{proof}
We now develop Theorem \ref{theo_Trefethen} for the DLI.

\begin{mythe}\label{theo_erro_analysis}
Let $u(x)$ be analytic in a region $\Omega$ containing the distinct points of $\{\psi_i(x_j)\}_{i,j=0}^{N,N}$ where $x_j$s are also the distinct points, $\psi_i(x_j) \neq \psi_i(x_i)$ for all $i \neq j$, and ${\psi_i}'(x_i)\neq 0$ for any $i$. And let $\Gamma$ be a contour in $\Omega$ enclosing these points in the positive direction. The interpolation function of $u_N(x)$ to $u(x)$ at $\{x_j\}$ is
\begin{equation}
u_N(x)=\frac{1}{N+1}\sum_{j=0}^{N} \frac{1}{2\pi i} \int_{\Gamma}\frac{{\psi_j}'(t)u(t) \big( w^\psi(t) -w^\psi(x)\big)}{w^\psi(t)\big(\psi_j(t)-\psi_j(x) \big)}dt.
\end{equation}
and if $x$ is enclosed by $\Gamma$, the error in the interpolant is
\begin{equation}
u_N(x)-u(x)=\frac{1}{N+1}\sum_{j=0}^{N} \frac{1}{2\pi i} \int_{\Gamma}\frac{{\psi_j}'(t)w^\psi(x)u(t)}{w^\psi(t)\big(\psi_j(x)-\psi_j(t) \big)}dt.
\end{equation}
\end{mythe}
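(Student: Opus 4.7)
My plan is to mimic the residue-calculus proof of Theorem~\ref{theo_Trefethen}, adapted to the generalized weight $w^\psi$. Fix $j$ and denote by $I_j(x)$ the $j$th contour integral in the statement. The first observation is that at $t=x$, both the denominator factor $\psi_j(t)-\psi_j(x)$ and the numerator factor $w^\psi(t)-w^\psi(x)$ have simple zeros (using $\psi_j'(x)\neq 0$ in the region), so $t=x$ is a removable singularity of the integrand. Under the injectivity assumption on each $\psi_j$ implicit in the hypothesis ``the points $\{\psi_i(x_j)\}$ are distinct'', the only remaining singularities inside $\Gamma$ are the simple zeros of $w^\psi(t)$, namely the nodes $\{x_k\}_{k=0}^N$.

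I would then compute the residue at each node $t=x_k$. Using $w^\psi(x_k)=0$ together with the formula for $(w^\psi)'(x_k)$ already derived in (\ref{eq15}), the residue of the $j$th integrand at $x_k$ works out to
\begin{equation*}
\frac{\psi_j'(x_k)\,w^\psi(x)\,u(x_k)}{(w^\psi)'(x_k)\,\bigl(\psi_j(x)-\psi_j(x_k)\bigr)}.
\end{equation*}
For $k=j$, the definition of $\mu_j$ and formula (\ref{eq16}) make this exactly $u(x_j)L_j^\psi(x)$, reproducing the $j$th Lagrange term. The residue theorem then gives $I_j(x)=u(x_j)L_j^\psi(x)+\sum_{k\neq j}(\text{cross term at }x_k)$. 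Summing over $j$ and dividing by $N+1$ should then recover $u_N(x)=\sum_k u(x_k)L_k^\psi(x)$, provided the $N(N+1)$ cross terms collapse appropriately.

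For the error formula I would split the integrand algebraically as
\begin{equation*}
\frac{\psi_j'(t)u(t)}{\psi_j(t)-\psi_j(x)}-\frac{\psi_j'(t)\,w^\psi(x)\,u(t)}{w^\psi(t)\bigl(\psi_j(t)-\psi_j(x)\bigr)}.
\end{equation*}
The first piece has a single simple pole at $t=x$ inside $\Gamma$ (by the injectivity of $\psi_j$) with residue $u(x)$, so by Cauchy every $I_j$ equals $u(x)$ plus the rearranged second piece; averaging over $j$ preserves the $u(x)$ contribution, and rewriting $\psi_j(t)-\psi_j(x)=-(\psi_j(x)-\psi_j(t))$ delivers the stated error expression immediately once the interpolation identity is in hand. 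The main obstacle I foresee is the combinatorial step in the previous paragraph: in Trefethen's classical case ($\psi_j(t)\equiv t$) each $I_j$ equals $u_N(x)$ outright, so the $1/(N+1)$ factor is vacuous, but here the $\psi_j$'s genuinely differ and the cross terms must combine to contribute exactly $N\,u_N(x)$ across the $j$-sum. Verifying that identity---presumably by exploiting the product structure of $w^\psi$ and a symmetry argument in the pair $(j,k)$---is the one genuinely non-routine step.
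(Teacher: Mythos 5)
Your plan follows the same route as the paper's own proof: represent each $L_j^\psi(x)$ as a contour integral via the residue of ${\psi_j}'(t)w^\psi(x)/\bigl(w^\psi(t)(\psi_j(x)-\psi_j(t))\bigr)$ at $t=x_j$ (the paper's Eq.~(\ref{theo_erro_eq03})), enlarge the contour to enclose all nodes and then $x$ itself, read off the extra residue $-u(x)$ at $t=x$, and pass between the two displayed formulas by exactly the algebraic split you describe. The only difference is the order (you would prove the interpolant formula first and deduce the error formula; the paper does the reverse), which is immaterial.

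The step you flag as ``genuinely non-routine'' is indeed the crux, and it is precisely the step the paper skips: the passage from the single-pole contours $\Gamma_j$ to the all-enclosing contour $\Gamma'$ in Eq.~(\ref{theo_erro_eq04}) is simply asserted there, with no accounting for the cross residues. Your worry is well founded, because the cross terms do \emph{not} collapse in general. Summing the residues at the nodes gives $\frac{1}{N+1}\sum_{j=0}^{N} I_j=\sum_{k=0}^{N} u(x_k)\,\frac{w^\psi(x)}{(w^\psi)'(x_k)}\cdot\frac{1}{N+1}\sum_{j=0}^{N}\frac{{\psi_j}'(x_k)}{\psi_j(x)-\psi_j(x_k)}$, so recovering $u_N(x)=\sum_k u(x_k)L_k^\psi(x)$ for arbitrary data requires $\frac{1}{N+1}\sum_{j}\frac{{\psi_j}'(x_k)}{\psi_j(x)-\psi_j(x_k)}=\frac{{\psi_k}'(x_k)}{\psi_k(x)-\psi_k(x_k)}$ identically in $x$ for every $k$. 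This holds when all the $\psi_j$ coincide (then each $I_j$ already equals $u_N(x)$ and the $1/(N+1)$ average is vacuous, exactly as in Trefethen's classical case), but it fails for genuinely distinct $\psi_j$: for $N=1$, $\psi_0(t)=t$, $\psi_1(t)=t^2$, $x_0=1$, $x_1=2$, the $k=0$ identity would require $\frac{2}{x^2-1}=\frac{1}{x-1}$, which is false. So no symmetry or product-structure argument will close this gap; the stated formulas are valid only under the extra hypothesis that the $\psi_j$ are all equal, or after replacing $\frac{1}{N+1}\sum_j\int_{\Gamma}$ by $\sum_j\int_{\Gamma_j}$ with each $\Gamma_j$ enclosing only $x_j$. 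A smaller shared caveat: both your argument and the paper's need $t=x$ (respectively the nodes $x_k$) to be the \emph{only} zeros of $\psi_j(t)-\psi_j(x)$ (respectively of $w^\psi(t)$) inside $\Gamma$, so the injectivity you invoke must genuinely be added to the hypotheses rather than read off from them.
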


\begin{proof}
Assume that $u_N(x)$ is an interpolation function for $u(x)$ by the DLI in the distinct points of $\{x_j\}$:
\begin{equation}\label{theo_erro_eq01}
u(x) \approx u_N(x)=\sum_{j=0}^N u_N(x_j)~L_j^\psi(x),
\end{equation}
Let $\Gamma_j$ be a contour in the complex $x$-plane that encloses $\psi_j(x_j)$ but it does not include any other points and the point of $\psi_j(x)$.\\
According to the definition of \textit{\textbf{residue}}, the residue of the function ${\psi_j}'(t)/\big(w^\psi(t)(\psi_j(x)-\psi_j(t))\big)$ is equal to ${\psi_j}'(x_j)/\big((w^\psi(x_j))'\big(\psi_j(x)-\psi_j(x_j)\big)\big)$ at the pole $t=x_j$. So, the right side in Eq. (\ref{eq16}) can be written as follows:
\begin{equation}\label{theo_erro_eq02}
\frac {{\psi_j}'(x_j)w^\psi(x)}{(w^\psi(x_j))'\big(\psi_j(x)-\psi_j(x_j)\big)}=\frac{1}{2\pi i} \int_{\Gamma_j}\frac{{\psi_j}'(t)w^\psi(x)}{w^\psi(t)\big(\psi_j(x)-\psi_j(t) \big)}dt,
\end{equation}
By using Eqs. (\ref{eq16}) and (\ref{theo_erro_eq02}), we can write a contour integral form for $L_j^\psi(x)$ as follows:
\begin{equation}\label{theo_erro_eq03}
L_j^\psi(x)=\frac{1}{2\pi i} \int_{\Gamma_j}\frac{{\psi_j}'(t)w^\psi(x)}{w^\psi(t)\big(\psi_j(x)-\psi_j(t) \big)}dt,
\end{equation}
where $\Gamma_j$ encloses $\psi_j(x_j)$. Now, suppose that $\Gamma'$ is a contour that encloses all of the points of $\{\psi_j(x_j)\}$, but still not the point of $\psi_j(x)$, and $u(x)$ is an analytical function interior and on to $\Gamma'$. So, we can now combine these integrals to obtain an expression for the interpolation function of $u_N(x)$ to $u(x)$ in $\{x_j\}$:
\begin{equation}\label{theo_erro_eq04}
u_N(x)=\frac{1}{N+1}\sum_{j=0}^{N} \frac{1}{2\pi i} \int_{\Gamma'} \frac{{\psi_j}'(t)w^\psi(x)u(t)}{w^\psi(t)\big(\psi_j(x)-\psi_j(t) \big)}dt.
\end{equation}
Now, we enlarge the contour of integration to a new contour $\Gamma$ that encloses $\psi_j(x)$ as well as $\{\psi_j(x_j)\}$, and we assume $u(x)$ is analytic interior and on to $\Gamma$. The residue of the integrand of (\ref{theo_erro_eq04}) at $t=x$ is $-u(x)$, because according to the residue definition, we have:
$$Residue_x=\lim_{t \to x}(t-x)\frac{{\psi_j}'(t)w^\psi(x)u(t)}{w^\psi(t)\big(\psi_j(x)-\psi_j(t) \big)}\overset{L'Hopital's~rule}{=}-u(x).$$
So this brings in a new contribution $-u(x)$ to the integral, an equation for the error in the interpolation function obtains:
\begin{equation}\label{theo_erro_eq05}
u_N(x)-u(x)=\frac{1}{N+1}\sum_{j=0}^{N} \frac{1}{2\pi i} \int_{\Gamma}\frac{{\psi_j}'(t)w^\psi(x)u(t)}{w^\psi(t)\big(\psi_j(x)-\psi_j(t) \big)}dt.
\end{equation}
Furthermore, we know that $u(x)$ can be written
$$u(x)=\frac{1}{N+1}\sum_{j=0}^{N} \frac{1}{2\pi i} \int_{\Gamma}\frac{{\psi_j}'(t)w^\psi(t)u(t)}{w^\psi(t)\big(\psi_j(t)-\psi_j(x) \big)}dt,$$
thus by using Eq. (\ref{theo_erro_eq05}), we have
\begin{equation}\label{theo_erro_eq07}
u_N(x)=\frac{1}{N+1}\sum_{j=0}^{N} \frac{1}{2\pi i} \int_{\Gamma}\frac{{\psi_j}'(t)u(t) \big( w^\psi(t) -w^\psi(x)\big)}{w^\psi(t)\big(\psi_j(t)-\psi_j(x) \big)}dt.
\end{equation}
The proof is complete.
\end{proof}

\section{Summary and conclusion} \label{sec_sum}
In this work, an accurate \textit{Developed Lagrange Interpolation} is introduced to solve the nonlinear/linear multi-dimensional differential equations. At first, new functions of the \textit{Developed Lagrange Functions} are defined, where $L_j^\psi(x_i)=\delta_{ij}$ at $x_i$ collocation points. Then, the classical Lagrange theorem is extended for DLFs and the corresponding derivative matrices of $\textbf{D}^{(m)}$ for all $m\in \mathbb{N}$ are calculated. Furthermore, we develop the error analysis of the classical Lagrange interpolation for the developed Lagrange interpolation. As shown in the paper, the present method has many properties, including the Kronecker delta property, implementation is very simple, and by choosing a suitable function for $\psi_i(x)$ we can solve various equations that are alternating or defined in unbounded domains or have conditions in the infinite, and etc. 


\footnotesize
\section*{References}

\end{document}